\documentclass[10.5pt, leqno, twoside]{article}   
\usepackage[applemac]{inputenc}
\usepackage{datetime}
\usepackage[T1]{fontenc}
\usepackage{mathtools, bm}
\usepackage{amsthm}
\usepackage{amssymb, bm}
\usepackage{bbm}
\usepackage[mathscr]{euscript}
\usepackage[margin=1in]{geometry}
\usepackage{systeme}
\usepackage{chngcntr}
\usepackage{titling}
\usepackage{cancel}
\usepackage{stmaryrd}
\usepackage{graphicx}
\usepackage{siunitx}
\usepackage{fancyhdr, ifthen}
\usepackage[normalem]{ulem} 
\usepackage{indentfirst}
\usepackage{amsmath}

\newtheoremstyle{theoremnoperiod}
  {\topsep}   
  {\topsep}   
  {\normalfont}  
  {0pt}       
  {\bfseries} 
  {}          
  {5pt plus 1pt minus 1pt} 
  {}          

\newtheorem*{theorem*}{Theorem}
\newtheorem{theorem}{Theorem}[section]

\newtheorem{lemma}[theorem]{Lemma}

\newtheorem*{conjecture*}{Conjecture}

\numberwithin{equation}{section}

\theoremstyle{theoremnoperiod}
\newtheorem*{thmnodot*}{Théorème}

\def\dd{{\rm d}}
\DeclareMathOperator{\N}{\mathbb{N}}
\DeclareMathOperator{\Z}{\mathbb{Z}}
\DeclareMathOperator{\R}{\mathbb{R}}

\DeclareMathOperator{\sA}{\mathscr{A}}

\newcommand{\bsv}{{\boldsymbol v}}
\newcommand{\bsw}{{\boldsymbol w}}
\newcommand{\bsl}{{\boldsymbol \ell}}

\DeclareMathOperator{\e}{\rm e}
\def\d{\,{\rm d}}
\def\1{{\bf 1}}

\renewcommand{\leq}{\leqslant}

\renewcommand{\geq}{\geqslant}

\usepackage{color}
\definecolor{vert}{rgb}{0,0.5,0}
\definecolor{violet}{rgb}{0.7,0.1,0.8}
\definecolor{orange}{rgb}{1,0.3,0}

\graphicspath{ {./images/} }
\providecommand{\keywords}[1]
{
  {\leftskip15mm\small	
  \textbf{Keywords} #1
\par }}
\providecommand{\amsclass}[1]
{
  {\leftskip15mm\small	
  \textbf{2020 Mathematics Subject Classification} #1
\par }}
\begin{document}
\title{On integral boxes of minimal surface}
\author{Jonathan Rotgé \& Gérald Tenenbaum}
\newcommand{\Addresses}{{
  \bigskip
  \footnotesize
  \textsc{Université d'Aix-Marseille, Institut de Mathématiques de Marseille CNRS UMR 7373, 163 avenue de Luminy, Case 907, 13288 Marseille Cedex 9, France}\par \nopagebreak
\textit{E-mail address:} \texttt{jonathan.rotge@etu.univ-amu.fr}\par 
\medskip\medskip
\noindent \textsc{Universit\' e de Lorraine,
B.P. 70239, 
F-54506 Vand\oe{}uvre-l\`es-Nancy Cedex, France}
\par \nopagebreak
\textit{E-mail address:} 
\texttt{Gerald.Tenenbaum@univ-lorraine.fr}
}}
\date{}
\maketitle
\begin{abstract}
Generalising the two-dimensional case, we provide estimates for the mean-values of the lengths of the edges of an integral box with given volume and minimal surface.
\end{abstract}
\keywords{Localised divisors, averages of arithmetic functions}
\amsclass{11N25, 11N37}
\section{Introduction and statements of results}

Given an integer $k\geqslant 2$, consider for each integer $n$ a $k$-dimensional box with integral edges and volume~$n$. How should we select the lengths $d_1,\ldots,d_k$ of the edges so that the surface of the box is minimal? Since the surface of a $k$-dimensional box with edges $d_1,\dots,d_k$ is given by
\[\sigma(d_1,\dots,d_k):=2\sum_{1\leq h\leq k}\prod_{\substack{1\leq m\leq k\\ m\neq h}}d_m\quad (k\geq 2),\]
it is clear that any sequence $\{\varrho_{h}(n)\}_{1\leq h\leq k}=\{\varrho_{k,h}(n)\}_{1\leq h\leq k}$ realizing this minimum and arranged in increasing order is a solution of the optimisation problem
\begin{equation*}
(P_{n,k})\quad
	\begin{cases}& \displaystyle\prod_{1\leq h\leq k}\varrho_{h}(n)=n,\\
	& \varrho_{1}(n)\leq\varrho_{2}(n)\leq\cdots\leq\varrho_{k}(n),\\
	& \displaystyle\sum_{1\leq h\leq k}\frac1{\varrho_{h}(n)}=\min_{d_1\dots d_k=n}\sum_{1\leq h\leq k}\frac1{d_h}\cdot
	\end{cases}
\end{equation*}
\par
This setting generalizes the case $k=2$ introduced by the second author in \cite{tenenbaum_76}.  In this case we have
\begin{equation}\label{eq:defrho21rho22}\varrho_{1}(n)=\max\{d|n:d\leq\sqrt n\},\quad \varrho_{2}(n)=\min\{d|n:d\geq\sqrt n\},\end{equation}
so  the pair $(\varrho_1(n),\varrho_2(n))$ is unique. We note in passing that uniqueness of the solution to  problem $(P_{n,k})$ is not necessarily granted in the general case. We leave this question open for the time being and merely indicate that numerical tests show that uniqueness holds for $k\in\{3,4,5\}$ and $n\leq 10^8$.
\par 
Estimates for the mean-values of $\varrho_{2,1}(n)$ and $\varrho_{2,2}(n)$ were given in \cite{tenenbaum_76}. The purpose of this note is to provide corresponding estimates in the general case of dimension $k$. This last parameter will be fixed throughout the paper,  consequently we shall omit to indicate dependence upon it. \par 
We shall see that evaluating the average of $\varrho_j$ is significantly more delicate for $j=1$ than for $j\geqslant 2$. By \cite[th.~2]{tenenbaum_76}, we have
\begin{equation}\label{eq:valmoy:rho22:ten}
	\sum_{n\leq x}\varrho_{2,2}(n)=\frac{\pi^2 x^2}{12\log x}\bigg\{1+O\bigg(\frac{1}{\log x}\bigg)\bigg\}\quad (x\geq 2).
\end{equation}
Improving upon estimates established in \cite[th.~1]{tenenbaum_76} and (implicitly) in \cite{GT84}, Ford \cite[cor.~6]{ford_2008} showed that
\begin{equation}\label{th:ford_res_'08}
	\sum_{n\leq x}\varrho_{2,1}(n)\asymp\frac{x^{3/2}}{(\log x)^{\delta}(\log_2 x)^{3/2}}\quad (x\geq 3),
\end{equation}
with 
\begin{equation}\label{def:alpha}
	\delta:=1-\frac{1+\log_2 2}{\log 2}\approx0.086071.
\end{equation}
 Here and throughout, we write $\log_j$ for the $j$th iterated logarithm. \par \goodbreak
De Koninck and Razafindrasoanaivolala (\cite{dekon_razaf_2020},\cite{dekon_razaf_2023}) estimated  the mean-values  of the ratios $\varrho_{2,1}(n)/\varrho_{2,2}(n)$ and $\varrho_{2,2}(n)/\varrho_{2,1}(n)^r$, $r>-1$, and also of ratios involving the logarithms of these functions.\par
A very recent result of Haddad \cite{TH25} provides the formula
$$\sum_{n\leqslant x}\log \varrho_{2,1}(n)=cx\log x+O(x)$$
for a suitable constant $c\in]0,\frac12]$.
\goodbreak
\par\medskip
We state our results as the two  following theorems.  We denote  Riemann's zeta function by $\zeta(s)$. We also put
\begin{equation}\label{def:Q;alphak}
	Q(v):=v\log v-v+1\quad (v>0),\quad \delta_k:=Q\Big(\frac{k-1}{\log k}\Big)\quad (k\geqslant 2).
\end{equation}

\begin{theorem}\label{th:avg_value_smallest_side}
	If the sequence $\{\varrho_h(n)\}_{h=1}^{k}$ satisfies $(P_{n,k})$, we have
	\begin{equation}\label{eq:asymp:valmoy:rhok1}
		\sum_{n\leq x}\varrho_{1}(n)\asymp_k\frac{x^{1+1/k}}{(\log x)^{\delta_k}(\log_2 x)^{3/2}}\quad (x\geq 3).
	\end{equation}
\end{theorem}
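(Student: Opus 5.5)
The plan is to reduce the problem to Ford's theory of localised divisors, as in the case $k=2$, by relating $\varrho_1(n)$ to the existence of a divisor of $n$ in a short range near $n^{1/k}$.

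\textbf{Structural lemma.} Since $\varrho_1\leq\cdots\leq\varrho_k$, we always have $\varrho_1(n)\leq n^{1/k}$. Write $y=x^{1/k}$. For the \emph{lower bound} we exhibit many $n\leq x$ with $\varrho_1(n)\geq y/2$, say. The natural family consists of the $n=d_1\cdots d_k$ in which all $d_h\asymp y$. Conversely we would like to show that if $\varrho_1(n)\geq n^{1/k}/T$ then the minimising box is essentially a cube. Indeed, by AM--GM applied to $\sum 1/d_h$, the minimum satisfies $\sum_h 1/\varrho_h(n)\geq k n^{-1/k}$, and every term is at most $1/\varrho_1$. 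Taking $\varrho_1$ large relative to $n^{1/k}$ should force all $\varrho_h\leq T^{O(1)}n^{1/k}$. Such an $n$ therefore has a factorisation $n=d_1\cdots d_k$ with each $d_h\in[n^{1/k}/T^{c},n^{1/k}T^{c}]$.

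\textbf{Upper bound.} Decompose dyadically according to the size of $\varrho_1(n)\approx n^{1/k}/T$. The contribution is at most
\[ \sum_{T=2^j} \frac{x^{1/k}}{T}\, \#\{n\leq x:\ n \text{ has a $k$-factorisation with all factors within a factor } T^{c} \text{ of } n^{1/k}\}. \]
The key input is a multiplicative analogue of Ford's $H(x,y,2y)$ estimate: the count of $n\leq x$ having $k$ factors $d_1,\dots,d_{k-1}$ localised in $[y,2y]$ (with the last factor $n/(d_1\cdots d_{k-1})$ then automatically close to $y$) is
\[ \asymp \frac{x}{(\log x)^{\delta_k}(\log_2 x)^{3/2}}. \]
The exponent $\delta_k=Q((k-1)/\log k)$ arises because typical such $n$ have about $\frac{k-1}{\log k}\log_2 x$ prime factors: the $k^{\omega(n)}$ ordered factorisations have to cover a box of volume $(\log x)^{k-1}$. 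The $(\log_2 x)^{-3/2}$ factor is the usual Ford-type correction. This estimate is, I think, known for $k$-fold factorisations (Koukoulopoulos, ``Localized factorizations of integers''), and I would invoke it in that form. Summing over $T$ with the extra factor $1/T$ converges, because the count grows only polynomially in $\log T$. The $n$ with $\varrho_1(n)$ tiny contribute at most $x\cdot x^{1/k}/(\log x)^{A}$ trivially. This gives the bound with $x^{1/k}\cdot x$, matching \eqref{eq:asymp:valmoy:rhok1}.

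\textbf{Lower bound.} Take the $n\in]x/2,x]$ admitting such a factorisation with $T$ bounded. These number $\gg x(\log x)^{-\delta_k}(\log_2 x)^{-3/2}$ by the lower half of Koukoulopoulos's theorem. For each of them we must show $\varrho_1(n)\gg x^{1/k}$. Since $(\varrho_h)$ is optimal, $\sum 1/\varrho_h\leq\sum 1/d_h\leq k2^{c}n^{-1/k}$, so $1/\varrho_1\ll n^{-1/k}$, as required.

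\textbf{Main obstacle.} The main difficulty lies in the upper bound: controlling the dependence on $T$ uniformly when factors are localised only within $T^{c}$, so that the dyadic sum converges. I would first try to bound the count by $(\log T)^{O(1)}$ times the $T=2$ count, via Koukoulopoulos's uniform estimates for $H^{(k)}(x,\bm y,2^{\bm\sigma}\bm y)$.
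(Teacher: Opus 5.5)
Your proposal is correct and follows essentially the same route as the paper: a dyadic decomposition by the size of $\varrho_1(n)$, the bound $\varrho_k(n)\leq x/\varrho_1(n)^{k-1}$ to localise the other edges, Koukoulopoulos's uniform estimate for $H^{(k)}$ applied to each of the $O((\log T)^{k-1})$ dyadic boxes that may contain $(\varrho_2(n),\dots,\varrho_k(n))$ (this union bound is exactly how the paper settles your ``main obstacle''), and a trivial treatment of very small $\varrho_1(n)$. The lower bound is also the paper's: integers $n\in]x/2,x]$ with $k-1$ divisors localised near $x^{1/k}$, combined with the optimality inequality $1/\varrho_1(n)<\sum_h 1/d_h$. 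One small simplification is available: measure $\varrho_1(n)$ against $x^{1/k}$ rather than $n^{1/k}$, as the paper does, so that the localising intervals do not depend on $n$ and no extra dyadic splitting in $n$ is needed.
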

\begin{theorem}\label{th:avg_value_largest_side}
	Let $2\leq j\leq k$ and $\gamma_j:=k+1-j$. If the sequence $\{\varrho_h(n)\}_{h=1}^{k}$ satisfies $(P_{n,k})$, we have
	\begin{equation}
	\label{vmrhoj}\sum_{n\leq x}\varrho_{j}(n)=\frac{x^{1+1/\gamma_{j}}}{(\log x)^{\gamma_{j}}}\bigg\{\frac{\gamma_{j}^{2\gamma_{j}}}{(\gamma_{j}+1)!}\zeta\bigg(1+\frac1{\gamma_{j}}\bigg)+O\bigg(\frac{1}{\log x}\bigg)\bigg\}\quad (x\geq 2).
	\end{equation}
\end{theorem}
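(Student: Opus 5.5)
The approach is to show that the sum is dominated by integers $n=m\,p_1\cdots p_\gamma$, where $\gamma:=\gamma_j$, $m$ is small and the $p_i$ are primes of size about $(x/m)^{1/\gamma}$. For such $n$ the optimal box puts each $p_i$ on its own edge, so $\varrho_j(n)=\min_i p_i$. Throughout, $\gamma\ge1$ is fixed and $j\ge2$.

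\textbf{Step 1 (local optimality).} We start from the structural input. Suppose $q$ is a prime factor of $\varrho_h(n)$ with $h\ne1$. Moving $q$ from the edge $\varrho_h(n)$ to $\varrho_1(n)$ gives another admissible factorisation. Comparing the values of $\sum 1/d$ in $(P_{n,k})$ gives $(q-1)/\varrho_h\ge(1-1/q)/\varrho_1$, that is $\varrho_h(n)\le q\,\varrho_1(n)\le q\,n^{1/k}$. Hence every edge $\varrho_h(n)$ with $h\ge j\ge 2$ satisfies
\[
P^-(\varrho_h(n))\ \ge\ \varrho_h(n)\,n^{-1/k},
\]
where $P^-$ denotes the smallest prime factor. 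Consequently, when $\varrho_j(n)$ is large compared with $n^{1/k}$, the edges $\varrho_j,\dots,\varrho_k$ are rough numbers with boundedly many prime factors. We also have the trivial bound $\varrho_j(n)^\gamma\le\varrho_j\cdots\varrho_k\le n/(\varrho_1\cdots\varrho_{j-1})$.

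\textbf{Step 2 (main term).} Write $n=m\,p_1\cdots p_\gamma$ with primes $p_i>(x/m)^{1/\gamma}(\log x)^{-B}$, where $m\le(\log x)^{C}$ and $m$ is coprime to the $p_i$. For such $n$ we show that the optimal box uses $\{p_i\}$ as the top $\gamma$ edges and splits $m$ among the $j-1$ smallest edges. Grouping two large primes on one edge, or attaching a factor of $m$ to a large edge, makes $\sum 1/d$ larger by an amount of order $\min p_i^{-1}$ times a factor $\gg1$. Thus $\varrho_j(n)=\min p_i$ on this set. We then evaluate
\[
\sum_{p_1<\dots<p_\gamma,\ \prod p_i\le y}p_1
\]
by the prime number theorem, writing $p_i=y^{u_i}$ and integrating over the simplex. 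The $\gamma$ primes each cost a factor $\gamma/\log y$. The resulting volume integral should produce $\dfrac{\gamma^{2\gamma}}{(\gamma+1)!}\,\dfrac{y^{1+1/\gamma}}{(\log y)^{\gamma}}\{1+O(1/\log y)\}$; for $\gamma=1$ this is consistent with \eqref{eq:valmoy:rho22:ten}. Summing over $m$ with $y=x/m$ gives $\sum_m m^{-1-1/\gamma}=\zeta(1+1/\gamma)$, with tails that are admissible.

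\textbf{Step 3 (error terms; main obstacle).} The hard part is proving that all remaining $n$ contribute $O\big(x^{1+1/\gamma}(\log x)^{-\gamma-1}\big)$. Some cases are easy:
\begin{itemize}
\item If $\varrho_j(n)\le x^{1/\gamma}(\log x)^{-\gamma-1}$, the contribution is trivially small.
\item Otherwise, Step 1 forces each of $\varrho_j,\dots,\varrho_k$ to be $z$-rough with $z=\varrho_h\,x^{-1/k}$. Here $z\ge x^{1/\gamma-1/k}(\log x)^{-A}$, which is a power of $x$ since $\gamma<k$.
\end{itemize}
I would then bound
\[
\sum_{\varrho_j(n)\text{ large}}\varrho_j(n)\ \le\ \sum_{a}\ \sum_{\substack{d_j\le\dots\le d_k\\ a\,d_j\cdots d_k\le x\\ P^-(d_h)\ge d_h x^{-1/k}}} d_j .
\]
Sieve upper bounds give density $\ll 1/\log x$ for each rough $d_h$, which yields the factor $(\log x)^{-\gamma}$. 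The remaining difficulty is to gain one further $\log x$ when some $d_h$ is composite, or when $a$ is not small. For composite $d_h$, the rough number has at least two prime factors, each a power of $x$, and this saves a logarithm. For large $a$, the bound $\varrho_j^\gamma\le x/a$ makes $\sum_a a^{-1-1/\gamma}$ converge, and the range $a>(\log x)^{C}$ is negligible. Making the sieve and simplex computations uniform, especially near the boundary where $\varrho_j\approx(x/a)^{1/\gamma}$ and the $d_h$ are nearly equal, is where I expect most of the work.
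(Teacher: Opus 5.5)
\textbf{There is a genuine gap, and it sits in Step~3, where you expected the difficulty.}

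Your Step~1 only gives $P^-(\varrho_h(n))\geq \varrho_h(n)\,n^{-1/k}$, because you bound $\varrho_1(n)\leq n^{1/k}$. For $\varrho_h(n)\approx x^{1/\gamma}$ this yields $P^-(\varrho_h(n))\geq x^{1/\gamma-1/k}$. That does not exclude composite edges whenever $k\leq 2\gamma$, which always holds for $j=2$. For instance, $k=3$, $j=2$ allows edges near $x^{1/2}$ with two or three prime factors $\geq x^{1/6}$.

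Your remedy, that a composite rough edge ``saves a logarithm'', is false. The number of $d\in\,]D,2D]$ with $d=p_1p_2$ and $p_1,p_2\geq D^{c}$ is $\asymp_c D/\log D$, the same order as the number of primes there. So a sieve upper bound for configurations with a composite edge gives a quantity of the same order $x^{1+1/\gamma}/(\log x)^{\gamma}$ as the main term, not $O\big(x^{1+1/\gamma}/(\log x)^{\gamma+1}\big)$. Step~3 therefore cannot deliver the asymptotic formula.

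\textbf{The missing idea: composite edges do not occur at all.} You already wrote down both ingredients but did not combine them. Instead of $\varrho_1(n)\leq n^{1/k}$, use
\[\varrho_1(n)\leq\varrho_1(n)\cdots\varrho_{j-1}(n)\leq x/\varrho_j(n)^{\gamma}.\]
Suppose $\varrho_h(n)$ $(h\geq j)$ were composite, with $p:=P^-(\varrho_h(n))$.
\begin{itemize}
\item Your inequality $\varrho_h(n)\leq p\,\varrho_1(n)$ together with $p^2\leq\varrho_h(n)$ gives $p\leq\varrho_1(n)$.
\item The same inequality also gives $p\geq\varrho_j(n)/\varrho_1(n)$.
\item If $\varrho_j(n)>x^{\alpha_j}$ with $\alpha_j=2/(2\gamma+1)$, then $\varrho_1(n)<x^{1/(2\gamma+1)}<\varrho_j(n)/\varrho_1(n)$, a contradiction.
\end{itemize}
This is exactly Lemma~\ref{l:rholtouspremiers}. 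Hence $\varrho_j(n),\dots,\varrho_k(n)$ are all prime.

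Consequences:
\begin{itemize}
\item The $n$ with $\varrho_j(n)\leq x^{\alpha_j}$ contribute trivially $O(x^{1+\alpha_j})$.
\item The rest of the sum is exactly a sum of $p$ over $m$ and primes $p\leq q_1\leq\cdots\leq q_{k-j}$ with $mpq_1\cdots q_{k-j}\leq x$. Your Step~2 claim that such $n$ have $\varrho_j(n)=p$ follows rigorously from the same exchange inequality.
\item No sieve and no boundary analysis are needed.
\end{itemize}

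What remains is the prime number theorem computation of Step~2, which you left as ``should produce''. The paper does it with $p=y^{1/\gamma}\mathrm{e}^{-s}$ and $q_h=y^{1/\gamma}\mathrm{e}^{w_h-s}$. This gives the constant $\gamma^{\gamma}\cdot\gamma^{\gamma-1}/\{(\gamma+1)(\gamma-1)!\}=\gamma^{2\gamma}/(\gamma+1)!$. The range $s>(\gamma+2)\log_2 y$ is discarded at relative cost $O(1/\log y)$. Finally one sums over $m\leq x^{\alpha_j/3}$ to obtain $\zeta(1+1/\gamma)$.
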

\noindent{\it Remarks.} (i) The upper bound in \eqref{eq:asymp:valmoy:rhok1} is actually valid if $\varrho_1(n)$ is replaced by the smallest divisor $d_1$ in any representation $n=d_1\cdots d_k$.
\par (ii) The trivial bound $\varrho_{j}(n)^{k+1-j}\leq n$ readily implies
\begin{equation*}\label{def:avgord:rhokj}
	\sum_{n\leq x}\varrho_{j}(n)\leq\frac{2x^{1+1/\gamma_j}}{1+\gamma_j}\quad (1\leq j\leq k).
\end{equation*}
\par 
(iii) Generalising an observation made in \cite{tenenbaum_76} for the case $k=2$, we shall see that the average of $\varrho_j(n)$ is dominated by integers with exactly $k+1-j$ “large” prime factors, in a sense to be made precise later. 
\par
(iv) For $k=j=2$, \eqref{vmrhoj} coincides with \eqref{eq:valmoy:rho22:ten}.
\par 
(v) For $2\leq j\leq k$ and $0\leq h\leq j-2$, we have
\[\sum_{n\leq x}\varrho_{k-h,j-h}(n)\sim\sum_{n\leq x}\varrho_{k,j}(n)\qquad (x\to\infty).\]
\par

\section{A lemma}

For any ${\bsv}=(v_1,\dots,v_k)\in(\N^*)^k$, set
\[S({\bsv}):=\sum_{1\leq j\leq k}\frac1{v_j}
\cdot
\]
and let us equip
\[E_n:=\{\bsv\in(\N^*)^k:v_1\leq\cdots\leq v_k,\,v_1v_2\cdots v_k=n\},\]
with the total preorder relation $\preccurlyeq$ defined by
\[\bsv\preccurlyeq\bsw\Leftrightarrow S(\bsv)\leqslant S(\bsw)\quad (\bsv,\bsw\in E_{n}).\]
\par\medskip

The following result provides a necessary condition for optimality of a $k$-tuple. We denote by $P^-(n)$ the smallest prime factor of an integer $n>1$ and make the standard convention $P^-(1)=+\infty$.

\begin{lemma}\label{l:condmeilleuruplet}
	Let $n\geq 1$ and $\bsv\in E_n$. If $v_jP^-(v_h)<v_h$ for some $1\leq j<h\leq k$, then the ordered $k$-tuple $\bsw$ obtained from $\bsv$ on replacing $v_j$ by $v_jP^-(v_h)$ and  $v_h$ by $v_h/P^-(v_h)$ is an element of $E_n$ satisfying $\bsw\prec\bsv$.
\end{lemma}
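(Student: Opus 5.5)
The proof is a direct computation. Put $p:=P^-(v_h)$. The hypothesis $v_jp<v_h$ forces $v_h>v_j\geq1$, hence $v_h>1$ and $p$ is a genuine prime dividing $v_h$.

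\emph{Membership in $E_n$.} The new multiset of entries $\{v_i\}_{i\neq j,h}\cup\{v_jp,\,v_h/p\}$ consists of positive integers, and its product is still $v_1\cdots v_k=n$. Rearranging these entries in increasing order gives a $k$-tuple $\bsw$ with $w_1\leq\cdots\leq w_k$ and $w_1\cdots w_k=n$, so $\bsw\in E_n$. Reordering does not change $S$, which depends only on the multiset of entries.

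\emph{Strict decrease.} Since the other entries are unchanged,
\[
S(\bsw)-S(\bsv)=\frac1{v_jp}+\frac{p}{v_h}-\frac1{v_j}-\frac1{v_h}
=\frac{(p-1)(v_jp-v_h)}{v_jv_h\,p},
\]
which one checks by putting everything over the common denominator $v_jv_hp$:
\[
v_h+p^2v_j-pv_h-pv_j=(p-1)(pv_j-v_h).
\]
Here $p\geq2$ gives $p-1>0$, and the hypothesis gives $v_jp-v_h<0$. Hence $S(\bsw)<S(\bsv)$, that is, $\bsw\preccurlyeq\bsv$ but not $\bsv\preccurlyeq\bsw$, so $\bsw\prec\bsv$.

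There is no real obstacle. The only points needing care are that $v_h>1$, so that $P^-(v_h)$ is a prime rather than the convention $P^-(1)=+\infty$, and that reordering the entries leaves $S$ unchanged. The ordering $j<h$ is not needed in the algebra; only the inequality $v_jp<v_h$ is used.
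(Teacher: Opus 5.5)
Your proposal is correct and follows the paper's proof: membership in $E_n$ by reordering the modified entries, then the same identity $S(\bsw)-S(\bsv)=(p-1)(v_jp-v_h)/(v_jv_hp)<0$. Your extra check that the hypothesis forces $v_h>1$, so that $P^-(v_h)$ is a genuine prime rather than $+\infty$, is a harmless detail that the paper leaves implicit.
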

\begin{proof}
	Since $\bsv\in E_n$ and $\bsw$ is ordered, we have  $\bsw\in E_n$. Moreover, 
		\[S(\bsw)-S(\bsv)=\frac{\{P^-(v_h)-1\}\{v_jP^-(v_h)-v_h\}}{v_jv_hP^-(v_h)}< 0.\qedhere\]
\end{proof}
For purpose of further reference, we note as an immediate consequence of Lemma~\ref{l:condmeilleuruplet} that, for each $n\geq 1$, we  have
\begin{equation}\label{prop:rhokj}
	\varrho_{j}(n)P^-(\varrho_{h}(n))\geq\varrho_{h}(n)\quad (1\leq j<h\leq k).
\end{equation}

\section{Proof of Theorem \ref{th:avg_value_smallest_side}}

Set
\[N_{j,\ell}(x):=\bigg\{n\leq x:\frac{x^{1/k}}{2^{\ell+1}}<\varrho_{j}(n)\leq\frac{x^{1/k}}{2^{\ell}}\bigg\}\quad (x\geq 1,\, 1\leq j\leq k,\,\ell\in\Z).\]
	
\begin{lemma}\label{l:encad:rhokj:rhok1}
	Let $x\geq 1$. For each $n\leq x$, there exists a unique $\ell\geq0$ such that $n\in N_{1,\ell}(x)$ and
	\begin{equation}\label{eq:boundings:rhokj}
		\frac{x^{1/k}}{2^{\ell+1}}<\varrho_{j}(n)\leq2^{(\ell+1)(k-1)}x^{1/k}\quad (2\leq j\leq k).
	\end{equation}
\end{lemma}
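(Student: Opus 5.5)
We need: for n ≤ x, ϱ_1(n) ≤ n^{1/k} ≤ x^{1/k}, and ϱ_1(n) ≥ 1, so there is a unique integer ℓ ≥ 0 with x^{1/k}/2^{ℓ+1} < ϱ_1(n) ≤ x^{1/k}/2^ℓ. Existence and uniqueness follow because the intervals $]x^{1/k}/2^{\ell+1},x^{1/k}/2^\ell]$, $\ell\geq 0$, partition $]0,x^{1/k}]$. The bound ϱ_1(n) ≤ n^{1/k} holds since ϱ_1 is the smallest of k factors whose product is n.

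The lower bound in \eqref{eq:boundings:rhokj} is immediate from the ordering: for $j\geq 2$, $\varrho_j(n)\geq\varrho_1(n)>x^{1/k}/2^{\ell+1}$.

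For the upper bound, it suffices to treat $j=k$, since $\varrho_j(n)\leq\varrho_k(n)$. Here I would write
\[
\varrho_k(n)=\frac{n}{\prod_{1\leq h<k}\varrho_h(n)}\leq\frac{x}{\varrho_1(n)^{k-1}}.
\]
The last step uses $\varrho_h(n)\geq\varrho_1(n)$ for every $h$. Inserting $\varrho_1(n)>x^{1/k}/2^{\ell+1}$ gives
\[
\varrho_k(n)<\frac{x\,2^{(\ell+1)(k-1)}}{x^{(k-1)/k}}=2^{(\ell+1)(k-1)}x^{1/k},
\]
which is the required bound. Lemma~\ref{l:condmeilleuruplet} and \eqref{prop:rhokj} are not needed.

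The argument is elementary throughout. The only point needing care is checking that $\ell\geq 0$, i.e. that $\varrho_1(n)\leq x^{1/k}$; this is exactly the bound $\varrho_1(n)\leq n^{1/k}$ noted at the start.
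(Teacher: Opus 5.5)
Your proposal is correct and follows essentially the same argument as the paper. You obtain $\ell\geq0$ from $\varrho_1(n)\leq x^{1/k}$ and the dyadic partition of $]0,x^{1/k}]$, get the lower bound from the ordering, and get the upper bound from $\varrho_j(n)\leq\varrho_k(n)\leq x/\varrho_1(n)^{k-1}$.
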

\begin{proof}  The requirement that $n\in N_{1,\ell}(x)$ implies that $\ell$ is the greatest integer such that $\varrho_1(n)\leqslant x^{1/k}/2^\ell$. Moreover, since $\varrho_1(n)\leqslant x^{1/k}$ by definition, we must have $\ell\geqslant 0$.
	\par 
	When $2\leqslant j\leqslant k$, we have $\varrho_{j}(n)\geq\varrho_{1}(n)>x^{1/k}/2^{\ell+1}$ and
\[\varrho_{j}(n)\leq\varrho_{k}(n)\leq\frac{x}{\{\varrho_{1}(n)\}^{k-1}}\leq 2^{(\ell+1)(k-1)}x^{1/k}.\qedhere\]
\end{proof}

We can now embark on the proof of Theorem \ref{th:avg_value_smallest_side}. 
	For $\bsv=(v_1,\dots,v_m)\in[0,\infty[^m\ (m\geq 1)$, define
	\begin{gather*}
		\tau_{m+1}(n,\bsv):=|\{(d_1,\dots,d_m)\in\N^m:d_1\dots d_m\mid n,\, v_j<d_j\leq 2v_j\ (1\leq j\leq m)\}|\quad (n\geq 1),\\
		H^{(m+1)}(x,\bsv):=|\{n\leq x:\tau_{m+1}(n,\bsv)\geq 1\}|\quad (x\geq 1,\,m\geq 1).
	\end{gather*}
	We know from \cite[th.~1]{koukoulopoulos_2013} that
	\begin{equation}
	\label{estkoukou}
	H^{(m+1)}(x,\bsv)\asymp\frac{x}{(\log x)^{Q(1/\log r)}(\log_2x)^{3/2}},
	\end{equation}
	with $r:=(m+1)^{1/m}$, provided  \mbox{$2^{m+1}\prod_{1\leqslant h\leqslant m}v_h\leqslant x/v_1^{c_1}$} and $v_m\leqslant v_1^{c_2}$ for suitable constants $c_1$, $c_2$, \mbox{$0<c_1\leqslant 1\leqslant c_2$}. Note right-away that $Q(1/\log r)=\delta_k$ as defined in \eqref{def:Q;alphak} when $m=k-1$. 
	\par 
	By Lemma~\ref{l:encad:rhokj:rhok1}, for every $n\leq x$, there exists a unique $k$-tuple $\bsl_n=(\ell_{n,1},\dots,\ell_{n,k})\in\Z^{k}$ such that $n\in N_{j,\ell_{n,j}}(x)$ $(1\leq j\leq k)$, $\ell_{n,1}\geqslant 0$, and $-(\ell_{n,1}+1)(k-1)\leq\ell_{n,k}\leq \cdots \leq\ell_{n,1}$. Therefore
	\begin{equation}\label{eq:majo:Nk1lx}
		|N_{1,\ell}(x)|\leq \sum_{\substack{(\ell+1)(1-k)\leq\ell_k\leq\cdots\leq\ell_2\leq\ell\\ \ell+\ell_2+\cdots+\ell_k\geq 0}}H^{(k)}\Big(\frac{x}{2^{\ell+\ell_2+\cdots+\ell_k}},\Big\{\frac{x^{1/k}}{2^{\ell_j+1}}\Big\}_{2\leq j\leq k}\Big)\quad (\ell\geq 0).
	\end{equation}
	Now, setting $L_x:=\lceil2\delta_k(\log_2 x)/\log 2\rceil\ (x\geq 3)$,  we can write
	\begin{align*}
		\sum_{n\leq x}\varrho_{1}(n)&=\sum_{\substack{n\leq x\\ \varrho_{1}(n)\geq x^{1/k}/(\log x)^{2\delta_k}}}\varrho_{1}(n)+O\bigg(\frac{x^{1+1/k}}{(\log x)^{2\delta_k}}\bigg)\leq x^{1/k}\sum_{0\leq\ell\leq L_x}\frac{|N_{1,\ell}(x)|}{2^\ell}+O\bigg(\frac{x^{1+1/k}}{(\log x)^{2\delta_k}}\bigg).
	\end{align*}
	Since the required hypotheses for \eqref{estkoukou} are satisfied for $v_j:=x^{1/k}/2^{\ell_j+1}$ $(2\leqslant j\leqslant k)$, $0\leqslant \ell\leqslant L_x$, $\ell_j\asymp\ell+1$, and sufficiently large $x$, we deduce from \eqref{eq:majo:Nk1lx} that
	\begin{align*}
		\sum_{n\leq x}\varrho_{1}(n)&\ll_k\frac{x^{1+1/k}}{(\log x)^{\delta_k}(\log_2 x)^{3/2}} \sum_{0\leq\ell\leq L_x}\frac1{2^{\ell}}\sum_{\substack{(\ell+1)(1-k)\leq\ell_k\leq\cdots\leq\ell_2\leq\ell\\ \ell+\ell_2+\cdots+\ell_k\geq 0}}\frac{1}{2^{\ell+\ell_2+\cdots+\ell_k}}\\
		&\ll_k\frac{x^{1+1/k}}{(\log x)^{\delta_k}(\log_2 x)^{3/2}}\sum_{0\leq\ell\leq L_x}\frac{\{k(\ell+1)\}^{k-1}}{2^{\ell}}\ll_k\frac{x^{1+1/k}}{(\log x)^{\delta_k}(\log_2 x)^{3/2}}\cdot
	\end{align*}
	\par
	It remains to establish the lower bound included in \eqref{eq:asymp:valmoy:rhok1}. Set
	\[\sA(x):=\big|\big\{\tfrac12x<n\leq x:\mu(n)^2=1,\,\tau_{k}\big(n,\big\{\tfrac12x^{1/k}\big\}_{1\leq j\leq k-1}\big)\geq 1\big\}\big|\quad (x\geq 1),\]
	where, here and throughout, $\mu$ refers to the Möbius function. Observe that
	\[\frac1{\varrho_{1}(n)}< \sum_{1\leq j\leq k}\frac1{\varrho_{j}(n)}<\frac{2k}{x^{1/k}}\qquad \big(n\in\sA(x)\big),\]
	hence $x^{1/k}/2k<\varrho_{1}(n)\leq x^{1/k}$ by construction. By  \cite[th.~2]{koukoulopoulos_2013}, it follows that
	\begin{align*}
		\sum_{n\leq x}\varrho_{1}(n)&\geq\sum_{\substack{n\leq x\\ x^{1/k}/2k<\varrho_{1}(n)\leq x^{1/k}}}\varrho_{1}(n)\geq\frac{x^{1/k}}{2k}\Big|\Big\{n\leq x:\frac{x^{1/k}}{2k}<\varrho_{1}(n)\leq x^{1/k}\Big\}\Big|\\
		&\geq\frac{x^{1/k}\sA(x)}{2k}\gg_k \frac{x^{1+1/k}}{(\log x)^{\delta_k}(\log_2 x)^{3/2}}\cdot
	\end{align*}
	 This completes the proof.

\section{Proof of Theorem \ref{th:avg_value_largest_side}}

Recall definition  $\gamma_{j}:=k+1-j$, and put
\begin{equation}\label{def:alphakj}
	\alpha_{j}:=\frac{1}{\gamma_{j}+1/2}=\frac{1}{k-j+3/2}\quad (1\leq j\leq k).
\end{equation}

Our proof of Theorem \ref{th:avg_value_largest_side} is based on the observation  that the sums \eqref{vmrhoj} are dominated by large values of $\varrho_{j}(n)$. More precisely, we shall see that the structure of the integers $n\leq x$ such that $\varrho_{j}(n)>x^{\alpha_{j}}$ is very constrained: in that case the $\varrho_{h}(n)\ (j\leq h\leq k)$ are all prime.  To lighten  notation, we write
\begin{equation}
	\pi_{j}(n):=\prod_{1\leq h\leq j}\varrho_{h}(n)\quad (n\geq 1,\,1\leq j\leq k).
\end{equation}

\begin{lemma}\label{l:rholtouspremiers}
	Let $1\leqslant n\leqslant x$ and $2\leq j\leq k$. If $\varrho_{j}(n)>x^{\alpha_{j}}$, then $\varrho_{h}(n)$ is a prime number for $j\leqslant h\leqslant k$.
\end{lemma}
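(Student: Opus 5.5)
The plan is a direct contradiction argument built on inequality \eqref{prop:rhokj}. Fix $n\leq x$ and $2\leq j\leq k$ with $\varrho_j(n)>x^{\alpha_j}$, and fix $h$ with $j\leq h\leq k$. Since $\varrho_h(n)\geq\varrho_j(n)>x^{\alpha_j}\geq 1$, we have $\varrho_h(n)>1$. So it suffices to rule out that $\varrho_h(n)$ is composite.

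Suppose $\varrho_h(n)$ is composite and set $p:=P^-(\varrho_h(n))$. Then $p\leq\sqrt{\varrho_h(n)}$. Since $h\geq j\geq 2>1$, we may apply \eqref{prop:rhokj} with the pair of indices $(1,h)$, which gives $\varrho_1(n)\,p\geq\varrho_h(n)$. Hence
\[
\varrho_1(n)\geq \frac{\varrho_h(n)}{p}\geq\sqrt{\varrho_h(n)}\geq\sqrt{\varrho_j(n)}>x^{\alpha_j/2}.
\]

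Next, bound $n$ from below using only the smallest edge and the edges of index at least $j$. All the remaining factors $\varrho_2(n),\dots,\varrho_{j-1}(n)$ are at least $1$; this is where $j\geq2$ is used, so that index $1$ is distinct from the indices $j,\dots,k$. Each $\varrho_m(n)$ with $m\geq j$ is at least $\varrho_j(n)$, and there are $\gamma_j=k+1-j$ of them. Therefore
\[
n\geq \varrho_1(n)\prod_{j\leq m\leq k}\varrho_m(n)\geq\varrho_1(n)\,\varrho_j(n)^{\gamma_j}>x^{\alpha_j/2}\,x^{\alpha_j\gamma_j}=x^{\alpha_j(\gamma_j+1/2)}=x,
\]
by the definition \eqref{def:alphakj} of $\alpha_j$. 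This contradicts $n\leq x$, so $\varrho_h(n)$ is prime for every $h$ with $j\leq h\leq k$.

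There is no real obstacle here. The only point needing care is the choice of exponent: the factor $\sqrt{\cdot}$ coming from the smallest prime factor is exactly what produces the extra $1/2$ in $\gamma_j+1/2$. The argument should also be applied with the pair $(1,h)$ rather than $(j,h)$, so that the bound on $\varrho_1(n)$ can be used as a genuinely additional factor in the product.
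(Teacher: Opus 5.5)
Your proof is correct and is essentially the paper's argument: both combine the optimality inequality \eqref{prop:rhokj} with $\varrho_h(n)\geq p^2$ to force a small edge to be at least $\sqrt{\varrho_j(n)}>x^{\alpha_j/2}$, which contradicts the volume constraint $n\leq x$. The only difference is cosmetic. The paper uses the pair $(j-1,h)$ rather than $(1,h)$ (any index below $j$ works), and it phrases the contradiction as incompatible upper and lower bounds for $p$ via $\pi_{j-1}(n)<x^{1-\gamma_j\alpha_j}=x^{\alpha_j/2}$, whereas you conclude $n>x$ directly.
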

\begin{proof}
	Under the assumption $\varrho_{j}(n)>x^{\alpha_{j}}$, we have
	\begin{equation}\label{eq:majoration:pikj-1}
		\pi_{j-1}(n)\leq\frac{x}{\{\varrho_{j}(n)\}^{\gamma_{j}}}<x^{1-\gamma_{j}\alpha_{j}}=x^{1/(2\gamma_{j}+1)}.
	\end{equation}
	If $\varrho_{h}(n)$ is composite for some $h\in[j,k]$, then $\varrho_{h}(n)\geq p^2$, with $p:=P^-(\varrho_{h}(n))$. However, in view of Lemma~\ref{l:condmeilleuruplet} we have
	\begin{equation}\label{eq:mino:P-rhokl}
		p\geq\frac{\varrho_{h}(n)}{\varrho_{j-1}(n)}\geq\frac{\varrho_{j}(n)}{\varrho_{j-1}(n)}\geq\frac{\varrho_{j}(n)}{\pi_{j-1}(n)}>x^{1/(2\gamma_{j}+1)},
	\end{equation}
	by \eqref{eq:majoration:pikj-1}, while, since $\varrho_{j-1}(n)p\geq \varrho_{h}(n)\geq p^2$, we also have
	\begin{equation}\label{eq:majo:P-rhokl}
		p\leq\varrho_{j-1}(n)\leq\pi_{j-1}(n)<x^{1/(2\gamma_{j}+1)},
	\end{equation}
	by another appeal to  \eqref{eq:majoration:pikj-1}. Since \eqref{eq:mino:P-rhokl} and \eqref{eq:majo:P-rhokl} are incompatible, $\varrho_{h}(n)$ must be prime. 
\end{proof}
\medskip
Observe that, trivially, $\varrho_j(n)\leqslant x^{1/\gamma_j}$ $(1\leqslant j\leqslant k)$ and that the contribution to the left-hand side of~\eqref{vmrhoj} of those integers $n$ such that $\varrho_j(n)\leqslant x^{\alpha_j}$ is clearly  $\ll x^{1+\alpha_j}$, a quantity exceeded by the error term of~\eqref{vmrhoj}. Therefore, we can focus on those $n$ such that $\varrho_{j}(n)\in]x^{\alpha_{j}},x^{1/\gamma_j}]$. By Lemma~\ref{l:rholtouspremiers}, these integers admit a representation of the form $n=mpq_1\dots q_{k-j}$ with $p\leq q_1\leq\cdots\leq q_{k-j}$, where $p$ and the $q_h$ are all prime. It follows that
\begin{equation}
\label{decsommerhoj}
\sum_{n\leqslant x}\varrho_j(n)=S_j(x)+O\big(x^{1+\alpha_j}\big),
\end{equation}
with 
\begin{equation}\label{def:Tk}
	S_{j}(x):=\sum_{x^{\alpha_{j}}<p\leq x^{1/\gamma_j}}p\sum_{\substack{p\leq q_1\leq \dots\leq q_{k-j}\\ q_1\dots q_{k-j}\leq x/p}}\bigg\lfloor\frac{x}{pq_1\dots q_{k-j}}\bigg\rfloor\quad (x\geq 1,\,2\leqslant j\leqslant k).
\end{equation}
\par \goodbreak
We now evaluate $S_j(x)$.  Let $\Omega(n)$ denote the total number of prime factors of an integer $n$. We have
 \begin{equation}
 \begin{aligned}
 \label{defTj}
 S_j(x)&=\sum_{x^{\alpha_{j}}<p\leq x^{1/\gamma_j}}p\sum_{\substack{n\leqslant x/p\\P^-(n)\geqslant p\\\Omega(n)=k-j}}\sum_{m\leqslant x/np}1=\sum_{m\leqslant x^{1-\gamma_j\alpha_j}}\sum_{x^{\alpha_j}<p\leqslant (x/m)^{1/\gamma_j}}p\sum_{\substack{n\leqslant x/pm\\P^-(n)\geqslant p\\\Omega(n)=k-j}}1,
 \end{aligned}
 \end{equation}
\par 
Now from the trivial estimate
\[\sum_{p\leqslant x^{\alpha_j}}p\sum_{\substack{n\leqslant x/pm\\ P^-(n)\geqslant p\\ \Omega(n)=k-j}}1\leqslant \sum_{p\leqslant x^{\alpha_j}}p\sum_{\substack{n\leqslant x/pm\\ P^-(n)\geqslant p\\\Omega(n)=k-j}}\frac{x}{pmn}\ll  \frac{x\pi(x^{\alpha_j})(\log_2x)^{k-j}}{m}\ll\frac{x^{1+\alpha_j}(\log_2x)^{k-j}}{m\log x}\]
we see that removing the lower bound for $p$  in the summation conditions of \eqref{defTj} introduces a global error $\ll x^{1+\alpha_j}(\log_2x)^{k-j}$.
Observing that $1-\gamma_j\alpha_j=\alpha_j/2$ and defining
\begin{equation}
  T_j(y):=\sum_{p\leqslant y^{1/\gamma_j}}p\sum_{\substack{n\leqslant y/p\\ P^-(n)\geqslant p\\ \Omega(n)=k-j}}1\leqslant \sum_{p\leqslant y^{1/\gamma_j}}\sum_{\substack{n\leqslant y/p\\ P^-(n)\geqslant p\\\Omega(n)=k-j}}\frac{y}{n}\ll y^{1+1/\gamma_j} ,
  \label{defUj}
\end{equation}
we arrive at
\begin{equation}
\label{estSj}
S_j(x)=\sum_{m\leqslant x^{\alpha_j/3}}T_j\Big(\frac xm\Big)+O\Big(x^{1+(1-\alpha_j/3)/\gamma_j}\Big),
\end{equation}
in view of the inequality $\alpha_j<(1-\alpha_j/3)/\gamma_j$.

\smallskip
The next lemma furnishes an asymptotic formula for $T_{j}(y)$ as defined in \eqref{defUj}. 
\begin{lemma}
	Let $2\leq j\leq k$. We have 
	\begin{equation}\label{eq:est:Tk}
		T_{j}(y)=\frac{\gamma_{j}^{2\gamma_{j}}y^{1+1/\gamma_j}}{(\gamma_{j}+1)!(\log y)^{\gamma_{j}}}\bigg\{1+O\bigg(\frac{1}{\log y}\bigg)\bigg\}\quad (y\geq 2).
	\end{equation}
\end{lemma}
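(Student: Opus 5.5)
The plan is to treat $T_j(y)$ as a weighted count of $\gamma_j$-tuples of primes $p\le q_1\le\dots\le q_{\gamma_j-1}$ with $pq_1\cdots q_{\gamma_j-1}\le y$ and weight $p$. Each prime is replaced by its density $1/\log u$, which reduces the count to a real integral, and that integral is computed in closed form. Throughout write $g:=\gamma_j$, so $\Omega(n)=k-j=g-1$ in \eqref{defUj}, and put $Y:=y^{1/g}$.

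\emph{Step 1: localisation near $Y$.} Parametrise $p=Y/t$ with $t\ge1$. The inner count is then the number of $(g-1)$-tuples of primes $q_i\ge p$ with $\prod q_i\le y/p$. By the trivial bound $\ll (y/p)(\log_2 y)^{g-2}/\log y$, or better by the volume estimate below, the contribution of $p\le Y/t$ is $\ll y^{1+1/g}t^{-1}(\log t+1)^{g-2}(\log y)^{-g}$. In fact it is more: the factor $p\cdot(y/p)$ alone is not enough, so I will keep the exact volume, which decays like $t^{-1}(\log t)^{g-2}$ relative to the main term. Combined with the density $1/\log p\asymp 1/(\log Y-\log t)$, this shows that the range $t>\exp(\sqrt{\log y})$ contributes $O(y^{1+1/g}(\log y)^{-g-1})$. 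The truncation point is chosen so that the $\log t$ loss is absorbed. In the remaining range every prime involved satisfies $\log q=\tfrac1g\log y+O(\log t)$, because $q_i\le y/p^{g-1}=Yt^{g-1}$.

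\emph{Step 2: replacing primes by densities.} Write the sum over $p$ and over the $q_i$ as iterated Stieltjes integrals against $\dd\pi(u)$. Replace $\pi(u)$ by $\mathrm{li}(u)$ using the prime number theorem with de la Vallée Poussin error term; the error is negligible since all variables exceed $y^{1/g}e^{-\sqrt{\log y}}$. Then replace each $1/\log u$ by $g/\log y$. In the range of Step 1 the relative error is $O((1+\log t)/\log y)$, and after integration against the decaying weight this gives the claimed factor $1+O(1/\log y)$. Ordering is handled by noting that the region $\{p\le u_1\le\dots\le u_{g-1}\}$ is $1/(g-1)!$ of the symmetric region $\{u_i\ge p\}$, up to diagonals of negligible measure.

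\emph{Step 3: the integral.} We are reduced to
\[
\Big(\frac{g}{\log y}\Big)^{g}\frac1{(g-1)!}\int_0^{Y}p\,V(p)\,\dd p,\qquad V(p):=\mathrm{vol}\Big\{u\in[p,\infty[^{g-1}:\prod u_i\le y/p\Big\}.
\]
The substitution $u=pw$ gives $V(p)=p^{g-1}\int_1^{X}(\log s)^{g-2}\,\dd s/(g-2)!$ with $X=y/p^g$. A further substitution $p=Y\theta$ factors out $y^{1+1/g}$. The remaining constant is an elementary integral that can be evaluated by interchanging the order of integration (Fubini, integrating in $\theta$ first). I expect it to produce $g^{g}/(g+1)!$ once the prefactor $1/(g-1)!$ is included, which combined with $g^g$ from the densities gives $g^{2g}/(g+1)!$. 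I would check this against $g=1$, where the result is the classical $\sum_{p\le y}p\sim y^2/(2\log y)$, and against $g=2$, which matches \eqref{eq:valmoy:rho22:ten}.

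\emph{Main obstacle.} The delicate point is Steps 1–2: obtaining a relative error of exactly $O(1/\log y)$, rather than $O(\log_2 y/\log y)$, uniformly over the unbounded range of $t$. For this I would keep the factor $1/\log p$ exact as $1/(\log Y-\log t)$ and expand it only where $\log t\le\sqrt{\log y}$. The error integrals are then of the form $\int_1^\infty t^{-2}(\log t)^{m}\,\dd t<\infty$, after taking the weight $p\,\dd p\asymp Y^2t^{-3}\,\dd t$ into account, which yields a clean $O(1/\log y)$.
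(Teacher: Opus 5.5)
Your proposal is correct and follows essentially the paper's route. Both arguments use the prime number theorem to replace each prime by $\mathrm{d}u/\log u$, and both localise in $\log(y^{1/\gamma_j}/p)$, which is the paper's variable $s$; the paper truncates at $s\leqslant(\gamma_j+2)\log_2 y$ where you truncate at $\sqrt{\log y}$. Both then expand the logarithms to first order with relative error $O((1+s)/\log y)$ against an integrable weight, and evaluate the limiting integral exactly by Fubini. Your unfinished computation does give $\int_0^1\theta^{g}\int_1^{\theta^{-g}}\frac{(\log \sigma)^{g-2}}{(g-2)!}\,\mathrm{d}\sigma\,\mathrm{d}\theta=\frac{g^{g-1}}{g+1}$, hence the constant $g^{2g}/(g+1)!$. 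One slip in your sanity check: \eqref{eq:valmoy:rho22:ten} is the case $k=j=2$, i.e.\ $g=1$, not $g=2$; the factor $\zeta(2)$ there comes from the later summation over $m$.
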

\begin{proof}
	Set
	\[A_{h}(v,t):=\sum_{\substack{q_1\geqslant \dots\geqslant q_{h}\geqslant t\\ q_1\dots q_h\leq v}}1\quad (h\geq 1,\,2\leq t\leq v),\]
	so that
	\begin{equation}\label{rep:Uj;A_j}
		T_{j}(y)=\int_{2}^{y^{1/\gamma_j}}tA_{k-j}\Big(\frac{y}{t},t\Big)\d\pi(t).
	\end{equation}
	 An iterated application of a strong form of the prime number theorem yields, for any fixed $h\geqslant 1$,
	 	\[A_{h}(v,t)=\int_{\substack{u_1\geqslant \dots\geqslant u_h\geq t\\ u_1\cdots u_h\leq v}}\d\pi(u_1)\cdots\d\pi(u_h)=\int_{\substack{u_1\geqslant \cdots\geqslant u_h\geq t\\ u_1\cdots u_h\leq v}}\frac{\d u_{1}\cdots \d u_{h}}{(\log u_1)\cdots(\log u_h)}+O\Big(v\e^{-\sqrt{\log v}}\Big).\]
	\goodbreak
	\noindent Therefore
	\[T_{j}(y)={1\over (k-j)!}\int_{2}^{y^{1/\gamma_j}}\int_{\substack{u_1,\dots,u_{k-j}\geq t\\ u_1\cdots u_{k-j}\leq y/t}}\frac{t\d u_{1}\cdots \dd u_{k-j}\dd t}{(\log t)(\log u_1)\dots(\log u_{k-j})}+O\Big(y^{1+1/\gamma_j}\e^{-\sqrt{\log y}}\Big).\]
Performing the change of variables $t=y^{v_0}$, $u_h=y^{v_h}\ (1\leq h\leq k-j)$, we obtain
	\[T_{j}(y)={1\over (k-j)!}\int_{(\log 2)/\log y}^{1/\gamma_j}\int_{\substack{v_1,\dots,v_{k-j}\geq v_0\\ v_1+\cdots+v_{k-j}\leq 1-v_0}}\frac{y^{2v_0+v_1+\cdots+v_{k-j}}\d v_0\d v_1\cdots \dd v_{k-j}}{v_0v_1\cdots v_{k-j}}+O\Big(y^{1+1/\gamma_j}\e^{-\sqrt{\log y}}\Big).\]
	For $2\leq h\leq k-j$, write
	\begin{gather*}
		 W_h:=\sum_{1\leq m\leq h}w_{m}\quad ( w_1,\dots,w_{h}\in\R).
	\end{gather*}
	The further change of variables
	\[v_0=\frac1{\gamma_j}-\frac{s}{\log y},\quad v_h=v_0+\frac{w_h}{\log y}=\frac1{\gamma_j}+\frac{w_h-s}{\log y}\quad (1\leq h\leq k-j)\]
	yields
	 \begin{equation}\label{eq:decomp:Tk;1;2}
		\begin{aligned}
			T_{j}(y)&={U_j(y)\over (k-j)!}+O\bigg(\frac{y^{1+1/\gamma_j}}{\e^{\sqrt{\log y}}}\bigg)\\
		\end{aligned}
	\end{equation}
	with
	\begin{equation*}\begin{aligned}
	U_j(y)&:=\frac{y^{1+1/\gamma_j}}{(\log y)^{\gamma_{j}}}\int_0^{\log(y^{1/\gamma_j}/2)}\frac{\dd s}{\e^{(\gamma_{j}+1)s}}\int_{\substack{w_1,\dots,w_{k-j}\geq 0\\ W_{k-j}\leq \gamma_{j}s}}\frac{{\e^{W_{k-j}}}\d w_1\cdots \d w_{k-j}}{D_{j}(s,y)},\\ D_{j}(s,y)&:=\Big(\frac1{\gamma_j}-\frac{s}{\log y}\Big)\prod_{1\leq h\leq k-j}\Big(\frac1{\gamma_j}+\frac{w_h-s}{\log y}\Big).\end{aligned}
	\end{equation*}
	For $ s\leq \log(\tfrac12y^{1/\gamma_j})$, we have
	\[\frac1{\gamma_j}+\frac{w_h-s}{\log y}\geq \frac1{\gamma_j}-\frac{s}{\log y}\geq\frac{\log 2}{\log y},\]
	hence the contribution of $s\geqslant Y:=(\gamma_j+2)\log_2y$ to $U_j(y)$ is
	\begin{equation}\label{eq:est:majo:Tk2}
		\begin{aligned}
			&\ll y^{1+1/\gamma_j}\int_{Y}^{\infty}\e^{-s}\d s\int_{\substack{w_1,\dots,w_{k-j}\geq 0\\ W_{k-j}\leq \gamma_{j}s}}\d w_1\cdots\d w_{k-j}\\
			&\ll y^{1+1/\gamma_j}\int_{Y}^{\infty}s^{k-j}\e^{-s}\d s\ll y^{1+1/\gamma_j}Y^{k-j}\e^{-Y}\ll\frac{y^{1+1/\gamma_j}}{(\log y)^{\gamma_{j}+1}}\cdot
		\end{aligned}
	\end{equation}
	Moreover, for $s\leq Y$, we have
	\[D_{j}(s,y)=\gamma_j^{-\gamma_{j}}+\gamma_j^{1-\gamma_{j}}\frac{W_{k-j}-\gamma_{j}s+O(1)}{\log y},\]
	and so
	\[\frac1{D_{j}(s,y)}=\gamma_{j}^{\gamma_{j}}\bigg\{1-\gamma_j\frac{W_{k-j}-\gamma_{j}s+O(1)}{\log y}\bigg\}.\]
	It follows that
	\begin{align*}
		U_j(y)&=\frac{\gamma_{j}^{\gamma_{j}}y^{1+1/\gamma_j}}{(\log y)^{\gamma_{j}}}\int_0^{Y}\frac{\dd s}{\e^{(\gamma_j+1)s}}\int_{\substack{w_1,\dots,w_{k-j}\geq 0\\ W_{k-j}\leq \gamma_{j}s}}\e^{W_{k-j}}\Big\{1+O\Big(\frac{s+1}{\log y}\Big)\Big\}\d w_1\cdots\d w_{k-1}.\\
	\end{align*}
	Since the contribution of the error term is clearly
	\[\ll \frac{y^{1+1/\gamma_j}}{(\log y)^{\gamma_{j}+1}}\int_0^\infty (s+1)^{\gamma_{j}}\e^{-s}\d s\ll\frac{y^{1+1/\gamma_j}}{(\log y)^{\gamma_{j}+1}},\]
	we finally get
	\begin{align}
	\label{Ujinterm}
		U_j(y)&=\frac{\gamma_{j}^{\gamma_{j}}y^{1+1/\gamma_j}}{(\log y)^{\gamma_{j}}}\int_0^{Y}\frac{\dd s}{\e^{(\gamma_j+1)s}}
		\int_{\substack{w_1,\cdots w_{k-j}\geq 0\\ W_{k-j}\leq \gamma_{j}s}}\e^{W_{k-j}}\d w_1\cdots\d w_{k-j}+O\bigg(\frac{y^{1+1/\gamma_j}}{(\log y)^{\gamma_{j}+1}}\bigg).
	\end{align}
	The range of the outer integral may now be extended to $[0,\infty[$ for the involved error is
	$$\ll \int_Y^\infty s^{k-j}\e^{-s}\d s\ll Y^{k-j}\e^{-Y}\ll{1\over (\log y)^{\gamma_j+1}}\cdot$$
	 Inverting the order of integrations in the multiple integral thus modified, we get that it is
	\begin{equation}\label{eq:est:Tk1}
		\begin{aligned}
			\int_{[0,\infty[^{k-j}}{\e^{W_{k-j}}}&\d w_1\cdots\dd w_{k-j}\int_{W_{k-j}/\gamma_{j}}^\infty\frac{\dd s}{\e^{(\gamma_j+1)s}}=\frac{1}{\gamma_{j}+1}\int_{[0,\infty[^{k-j}}\frac{\dd w_1\cdots\dd w_{k-j}}{\e^{W_{k-j}/\gamma_{j}}}
			=\frac{\gamma_{j}^{\gamma_{j}-1}}{\gamma_{j}+1}\cdot
		\end{aligned}
	\end{equation}
	Formula \eqref{eq:est:Tk} then follows from \eqref{eq:decomp:Tk;1;2} and \eqref{Ujinterm}.
\end{proof}

We are now in a position to complete the proof of Theorem \ref{th:avg_value_largest_side}. By \eqref{decsommerhoj}, \eqref{estSj} and \eqref{eq:est:Tk}, we have
\begin{align*}
	\sum_{n\leq x}\varrho_{j}(n)&=\frac{\gamma_{j}^{2\gamma_{j}}x^{1+1/\gamma_j}}{(\gamma_{j}+1)!(\log x)^{\gamma_{j}}}\sum_{1\leq m\leq x^{\alpha_j/3}}\frac{1}{m^{1+1/\gamma_j}}\bigg\{1+O\bigg(\frac{1+\log m}{\log x}\bigg)\bigg\}+O\bigg(\frac{x^{1+1/\gamma_j}}{(\log x)^{\gamma_{j}+1}}\bigg)\\
	&=\frac{\gamma_{j}^{2\gamma_{j}}\zeta(1+1/\gamma_j)x^{1+1/\gamma_j}}{(\gamma_{j}+1)!(\log x)^{\gamma_{j}}}+O\bigg(\frac{x^{1+1/\gamma_j}}{(\log x)^{\gamma_{j}+1}}\bigg),
\end{align*}
as required.

\Addresses

\end{document}